\newtheorem{thm}{Theorem}
\newtheorem{mydef}[thm]{Definition}
\newtheorem{prop}[thm]{Proposition}
\newtheorem{lem}[thm]{Lemma}
\renewcommand{\iint}{\int\hspace{-.8em}\int}
\newcommand\RR{\mathbb{R}}
\newcommand{\zzz}{\mathbf{z}}
\title[Addendum to ``Local Controllability of the \ldots\dots\ Micro-Swimmer'']{\bf Addendum to ``Local Controllability of the Two-Link Magneto-Elastic Micro-Swimmer''}
\author{Laetitia Giraldi \qquad Pierre Lissy \\ Cl{\'e}ment Moreau \qquad Jean-Baptiste Pomet}
\address{L. Giraldi and J.-B. Pomet are with Universit{\'e} C{\^o}te d'Azur, Inria, CNRS, LJAD, France}
\address{P. Lissy is with CEREMADE, Universit\'{e} Paris-Dauphine,
  Paris, France}
\address{C. Moreau is with ENS de Cachan, France.}
\email{Laetitia.Giraldi@inria.fr}
\email{Lissy@ceremade.dauphine.fr}
\email{Clement.Moreau@ens-cachan.fr}
\email{Jean-Baptiste.Pomet@inria.fr}
\begin{document}

\maketitle

\begin{abstract}
In the above mentioned note
(\href{http://hal.archives-ouvertes.fr/hal-01145537}{\texttt{<hal-01145537>}},
\href{http://arxiv.org/abs/1506.05918}{\texttt{<arXiv:1506.05918>}},
published in IEEE Trans. Autom. Cont., 2017), the first
and fourth authors proved a local controllability result around the
straight configuration for a class of magneto-elastic micro-swimmers.
That result is weaker than the usual small-time local
  controllability (STLC), and the authors left the STLC question open. The present addendum closes it by showing that these systems cannot be STLC.
\end{abstract}


\vspace{3\baselineskip}

\section{Model of the magneto-elastic micro-swimmer}

Keeping the same notations as in \cite{giraldi2015local}, the planar
micro swimmer's dynamics are
given by 
\begin{equation}\label{systeme_controle_robot}
\dot{\zzz} = \mathbf{F}_0(\zzz) + H_{\parallel}\,  \mathbf{F}_1(\zzz)+ H_{\!\bot}\, \mathbf{F}_2(\zzz)
\end{equation}
where (see figure 1 in \cite{giraldi2015local}):
\begin{itemize}
\item the state is $\zzz=(x,y,\theta,\alpha)$ with $\alpha$ an angle
describing the swimmer's shape and $x,y,\theta$ two 
coordinates and an angle describing its position,
\item the control is
$(H_{\!\bot},H_{\parallel})$, the coordinate vector of 
the external magnetic field in a moving frame, the norm on the control space $\RR^2$ being the sup-norm:
\begin{equation*}
  \|(H_{\!\bot},H_{\parallel})\|=\max\{\,|H_{\!\bot}|\,,\,|H_{\parallel}|\,\}\,,
\end{equation*}
\item the $\mathbf{F}_i$'s may be expressed as follows, with  $f_{i,j}$ twelve functions\footnote{The
  notation $f_{i,j}$ is not present in \cite{giraldi2015local}. }
of one variable explicitly derived
from \cite[Prop. II.1 and (12)-(16)]{giraldi2015local}:
\begin{equation}\label{systeme_controle_robot2}
\mathbf{F}_i(\zzz)= 
\begin{pmatrix}  \cos\theta & \sin\theta & 0 & 0 \\
				-\sin\theta & \cos\theta & 0 & 0 \\
                0 & 0 & 1 & 0 \\
                0 & 0 & 0 & 1 
\end{pmatrix}
\!\!
\begin{pmatrix} f_{i,1}(\alpha) \\ f_{i,2}(\alpha) \\ f_{i,3}(\alpha) \\ f_{i,4}(\alpha) \end{pmatrix}.
\end{equation}
\end{itemize}

\bigskip

In \cite{giraldi2015local}, the dynamics, hence the functions $f_{i,j}$, depend on: the
length $\ell_i$ of each segment  ($i=1,2$), its 
magnetization $M_i$, its longitudinal and transversal
hydrodynamic drag constants $\xi_i,\eta_i$, and an elastic constant
$\kappa$. It is assumed that $\kappa>0$ and that, for each $i$,
$\ell_i>0$, $\xi_i>0$, $\eta_i>0$ and $M_i\neq0$.
In this addendum, we further assume that
the two links have the same length and hydrodynamic constants, i.e. we
define:
\begin{align}
  \ell=\ell_1=\ell_2,\;
  \xi=\xi_1=\xi_2,\;
  \label{eq:2}
  \eta=\eta_1=\eta_2. 
\end{align}
This assumption makes the redaction easier to follow but it
does not alter the nature of the proofs.

The equilibria of interest are
$\left((x^e,y^e,\theta^e,0),(0,0)\right)$ in the state-control
space, with $(x^e,y^e,\theta^e)$ arbitrary in $\mathbb R^2\!\times\! [0,2\pi]$.
Using invariance by translation and rotation \cite{giraldi2015local},
one may, without loss of
generality, suppose $(x_e,y_e,\theta_e)=(0,0,0)$ and consider only the equilibrium
$\text{\textbf{O}}=\bigl((0,0,0,0),(0,0)\bigr)$.

\section{Some local controllability concepts}
Consider a smooth continuous-time control system
\begin{equation}
  \label{eq:1}
  \dot{z} = f(z,u)
\end{equation}
with state $z$ in $\mathbb{R}^n$ and control $u$ in $\mathbb{R}^m$. We
endow $\RR^m$ with a norm $\|\cdot\|$ and always assume that 
$u$ is essentially bounded.
Let $(z_e,u_e)$ be an equilibrium of \eqref{eq:1}, i.e. $f (z_e,u_e)=0$.
 
The following definition introduces an ad hoc notion of controllability for the sake of clarity.
\begin{mydef}[STLC($q$)]
\label{def-stlcq}
Let $q$ be a non-negative number.
The control system \eqref{eq:1} is \emph{STLC($q$) at $(z_e,u_e)$} 
if and only if, for every $\varepsilon >0$, there exists $\eta >0$ such that, for every $z_0,z_1$ in the ball centered at $z_e$ with radius $\eta$,
there exists a solution $(z(\cdot),u(\cdot)):[0,\varepsilon]\to\mathbb{R}^{n+m}$ of \eqref{eq:1} 
such that  $z(0)=z_0$, $z(\varepsilon)=z_1$, and, for almost all $t$ in $[0,\varepsilon]$,
\begin{equation*}
\| u(t)-u_e \| \leqslant q+\varepsilon\,.
\end{equation*}
\end{mydef}

Let us also recall the classical definition of STLC. 
\begin{mydef}[STLC]
\label{def-stlc}
The system \eqref{eq:1} is \emph{STLC (small-time locally controllable) at $(z_e,u_e)$} 
if and only if it is STLC(0) at $(z_e,u_e)$. 
\end{mydef}

The following necessary condition for STLC will be used. 
\begin{lem}[Loop trajectories]
\label{prop-loops}
If \eqref{eq:1} is STLC at $(z_e,u_e)$, then, for any
$\varepsilon>0$,  there exists a solution $t\mapsto(z^\varepsilon(t),u^\varepsilon(t))$ of \eqref{eq:1},
defined for $t$ in $[0,\varepsilon]$, such that 
\begin{itemize}
\item $z^\varepsilon(0)=z^\varepsilon(\varepsilon)=z_e$,
\item $z^\varepsilon(t)\neq z_e$ for at least one $t$ in $[0,\varepsilon]$,
\item $\|u^\varepsilon(t)-u_e\|\leqslant\varepsilon$ for almost all $t$ in $[0,\varepsilon]$.
\end{itemize}
\end{lem}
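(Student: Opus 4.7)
The plan is to construct the required loop by concatenating two short trajectories furnished by STLC: one going from $z_e$ to some nearby non-equilibrium state $z_1$, and one coming back. Non-triviality of the loop is ensured by routing through the intermediate pivot $z_1 \neq z_e$.

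Concretely, fix $\varepsilon > 0$. Applying the STLC hypothesis at $(z_e,u_e)$ with $\varepsilon/2$ in place of $\varepsilon$ yields some $\eta > 0$ such that any two points of the ball $B(z_e,\eta)$ are connected by a trajectory of \eqref{eq:1} defined on $[0,\varepsilon/2]$ whose control stays within distance $\varepsilon/2$ from $u_e$ almost everywhere. Pick any $z_1 \in B(z_e,\eta)\setminus\{z_e\}$, which is possible since $\eta > 0$. Invoke STLC twice to obtain (i) a pair $(z_a,u_a)\colon[0,\varepsilon/2]\to\RR^{n+m}$ with $z_a(0)=z_e$ and $z_a(\varepsilon/2)=z_1$, then (ii) a pair $(z_b,u_b)\colon[0,\varepsilon/2]\to\RR^{n+m}$ with $z_b(0)=z_1$ and $z_b(\varepsilon/2)=z_e$, both solutions of \eqref{eq:1} with controls within $\varepsilon/2$ of $u_e$.

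Concatenate these into a single pair $(z^\varepsilon,u^\varepsilon)$ on $[0,\varepsilon]$ by setting $z^\varepsilon(t)=z_a(t)$, $u^\varepsilon(t)=u_a(t)$ on $[0,\varepsilon/2]$ and $z^\varepsilon(t)=z_b(t-\varepsilon/2)$, $u^\varepsilon(t)=u_b(t-\varepsilon/2)$ on $[\varepsilon/2,\varepsilon]$. The resulting $u^\varepsilon$ is measurable and essentially bounded, and $z^\varepsilon$ is absolutely continuous since the two pieces are and they match at the junction $t=\varepsilon/2$ (both values equal $z_1$); hence $(z^\varepsilon,u^\varepsilon)$ is again a solution of \eqref{eq:1}. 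The three required properties all follow: $z^\varepsilon(0)=z^\varepsilon(\varepsilon)=z_e$, $z^\varepsilon(\varepsilon/2)=z_1\neq z_e$, and $\|u^\varepsilon-u_e\|\leqslant \varepsilon/2\leqslant\varepsilon$ almost everywhere.

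There is no substantive obstacle here; the argument is essentially the unpacking of STLC into a forward leg and a backward leg through a single non-equilibrium pivot. The only small point worth noticing is that STLC delivers a time horizon exactly equal to the accuracy parameter, which is why we must halve $\varepsilon$ at the outset so that the concatenated loop has total duration $\varepsilon$.
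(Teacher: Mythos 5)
Your proof is correct and takes essentially the same route as the paper's: invoke STLC with horizon $\varepsilon/2$, choose a non-equilibrium pivot in the resulting ball, and concatenate a forward leg with a time-shifted backward leg (implicitly using autonomy of the system, which the paper flags explicitly).
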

\begin{proof}
Let $\varepsilon>0$. There exists $\eta >0$ such that, for every $z_\star$ in the ball centered at $z_e$ with radius $\eta$,
there is a solution $(z^\varepsilon(\cdot),u^\varepsilon(\cdot)):[0,\varepsilon/2]\to\mathbb{R}^{n+m}$ of \eqref{eq:1} 
such that  $z^\varepsilon(0)=z_e$,
$z^\varepsilon(\varepsilon/2)=z_\star$, and  $\| u^\varepsilon(t)-u_e
\| \leqslant \varepsilon/2$  for almost all $t$.
Pick one such $z_\star$ diffwrent from $z_e$. System \eqref{eq:1} being autonomous, there also exists a solution 
$(z^\varepsilon(\cdot),u^\varepsilon(\cdot)):[\varepsilon/2,\varepsilon]\to\mathbb{R}^{n+m}$ of \eqref{eq:1} 
such that  $z(\varepsilon/2)=z_\star$, $z(\varepsilon)=z_e$, and, for almost all $t$ in $[\varepsilon/2,\varepsilon]$,  $\| u(t)-u_e \| \leqslant \varepsilon/2$.
Then, $(z^\varepsilon(\cdot),u^\varepsilon(\cdot)) :[0,\varepsilon]\to\mathbb{R}^{n+m}$ verifies all the desired properties.
 \end{proof}


\section{Complements to the original note}

The following proposition reformulates the results from
\cite{giraldi2015local}. Without assumption \eqref{eq:2},
$\xi\!\neq\!\eta$ would be replaced by $(\xi_1,\xi_2)\!\neq\!(\eta_1,\eta_2)$ and $M_1\!\neq\! M_2$ by \cite[eqn. (20)]{giraldi2015local}.

\begin{prop}[\cite{giraldi2015local}, Thm. III.4 and Prop. III.1] Assume \eqref{eq:2}. 
\label{th-old}
The control system \eqref{systeme_controle_robot} is 
STLC$\displaystyle\left( 2 \kappa \,\bigl.\left| M_1+M_2\right|\bigr/\left|M_1 M_2\right|\right)$ at \textbf{O}
if $\xi\neq\eta$ and $M_1\neq M_2$.
Otherwise, it is not STLC$(q)$ for any $q\geq0$.
\end{prop}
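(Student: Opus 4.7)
My plan is to translate \cite[Thm.~III.4 and Prop.~III.1]{giraldi2015local} into the STLC($q$) language of Definition~\ref{def-stlcq}: the mathematical content is already in the cited paper, and the work here is to check that the quantitative bound matches the claimed $q$, and that the obstructions in the degenerate cases survive for every $q \geq 0$.

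For the positive statement, I would begin from \cite[Thm.~III.4]{giraldi2015local}, whose conclusion states, under the non-degeneracy conditions (which, given \eqref{eq:2}, reduce to $\xi \neq \eta$ and $M_1 \neq M_2$), that for every $\varepsilon > 0$ and every pair of states close enough to $\mathbf{O}$ there is an admissible control of sup-norm at most $C(\kappa,M_1,M_2) + \varepsilon$ steering one state to the other in time $\varepsilon$. An algebraic simplification from the explicit formulas \cite[(12)--(16)]{giraldi2015local} under \eqref{eq:2} then yields $C(\kappa,M_1,M_2) = 2\kappa|M_1+M_2|/|M_1 M_2|$, which is exactly Definition~\ref{def-stlcq} with $q = 2\kappa|M_1+M_2|/|M_1 M_2|$.

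For the negative statement, I would invoke \cite[Prop.~III.1]{giraldi2015local}, which rules out local controllability in each of the two degenerate cases $\xi = \eta$ and $M_1 = M_2$. In both cases, the obstruction is \emph{structural}: for the isotropic-drag case $\xi = \eta$, a scallop-theorem-type reversibility argument produces a relation between the net displacement and the shape loop that forbids reaching a full neighborhood of $\mathbf{O}$; for the $M_1 = M_2$ case, the magnetic action decouples from the internal shape mode $\alpha$. Neither obstruction depends on the amplitude of the control, so they rule out STLC($q$) for every $q \geq 0$.

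The main obstacle will be verifying that the obstructions of \cite[Prop.~III.1]{giraldi2015local} are genuinely insensitive to the allowed control magnitude, since the original proposition may have been phrased only for controls tending to $0$. The essential point is that these obstructions are \emph{algebraic} invariants of the triple $(\mathbf{F}_0,\mathbf{F}_1,\mathbf{F}_2)$, rather than quantitative small-control estimates, so they persist under arbitrary rescaling of $(H_\bot,H_\parallel)$; making this explicit is where the substance of the verification lies.
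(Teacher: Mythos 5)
The addendum gives no proof of Proposition~\ref{th-old}: it is stated as a direct reformulation of Theorem III.4 and Proposition III.1 of the cited note, so your plan --- translating those results into the STLC($q$) vocabulary of Definition~\ref{def-stlcq} and checking that the constant matches $2\kappa|M_1+M_2|/|M_1M_2|$ under assumption~\eqref{eq:2} --- is exactly what the paper does. Your specific reconstructions of the degenerate-case obstructions (scallop-theorem reversibility for $\xi=\eta$, decoupling for $M_1=M_2$) are speculative and cannot be checked against the addendum itself, but your central observation that these obstructions must be structural (e.g.\ a failure of the Lie algebra rank condition, which confines the reachable set to a lower-dimensional submanifold) rather than small-control estimates, so that they rule out STLC($q$) for every $q\geq0$, is precisely the point one would need to verify in the cited note.
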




Unless $M_1+M_2=0$, 
STLC$\displaystyle\left( 2 \kappa \,\bigl.\left|M_1+M_2\right|\bigr/\left|M_1 M_2\right|\right)$ 
does not imply STLC. 
The purpose of the present addendum is to prove the following result:


\begin{thm}
\label{th-main}
Assume \eqref{eq:2}. \\ If $\xi\neq\eta$, $M_1\neq M_2$ and $M_1+M_2\neq0$,
system \eqref{systeme_controle_robot} is not STLC at \textbf{O}.
\end{thm}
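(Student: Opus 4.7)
The plan is to argue by contradiction. If \eqref{systeme_controle_robot} were STLC at $\mathbf{O}$, Lemma~\ref{prop-loops} would give, for every $\varepsilon>0$, a non-constant loop $(z^{\varepsilon},u^{\varepsilon}):[0,\varepsilon]\to\RR^{4+2}$ of \eqref{systeme_controle_robot} with $z^{\varepsilon}(0)=z^{\varepsilon}(\varepsilon)=\mathbf{0}$ and $\|u^{\varepsilon}(t)\|\leqslant\varepsilon$ almost everywhere. I intend to analyze these loops asymptotically as $\varepsilon\to 0$ and to extract a scalar constraint that forces $z^{\varepsilon}\equiv\mathbf{0}$, contradicting non-triviality.

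Several structural facts will be needed, inherited from \cite[Prop.~II.1 and (12)--(16)]{giraldi2015local} under \eqref{eq:2}. First, $\mathbf{F}_{0}(\mathbf{O})=\mathbf{F}_{1}(\mathbf{O})=\mathbf{0}$ (respectively because $\mathbf{O}$ is an equilibrium and because a parallel magnetic field exerts neither net force nor net torque on a straight swimmer), whereas $\mathbf{F}_{2}(\mathbf{O})$ has only its $\theta$- and $\alpha$-components non-zero, respectively proportional to $M_{1}+M_{2}$ and to $M_{2}-M_{1}$. Second, the Jacobian $A$ of $\mathbf{F}_{0}$ at $\mathbf{O}$ is supported only on its $\alpha$-column, containing the elastic restoring slope $f_{0,4}'(0)=-c<0$ (with $c>0$ proportional to $\kappa$) and a $y$-entry $f_{0,2}'(0)$ proportional to $\xi-\eta$. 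Grönwall's inequality combined with these vanishings and $\|u^{\varepsilon}\|_{\infty}=O(\varepsilon)$ on an interval of length $\varepsilon$ yields the uniform a priori bound $\|z^{\varepsilon}(t)\|=O(\varepsilon^{2})$ on $[0,\varepsilon]$.

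Integrating each scalar component of \eqref{systeme_controle_robot} over $[0,\varepsilon]$ and using $z^{\varepsilon}(\varepsilon)=z^{\varepsilon}(0)$ produces four scalar identities. After Taylor-expanding the $f_{i,j}$ and the rotation matrix in \eqref{systeme_controle_robot2} to the relevant order, the $\alpha$- and $\theta$-identities are driven linearly by the control and can be solved for the leading-order quantities $\int_{0}^{\varepsilon}\alpha\,dt$ and $\int_{0}^{\varepsilon}H_{\bot}\,dt$ in terms of the other integrals of the control. Substituting these expressions into the $x$- (and if needed $y$-) identity, whose leading order is purely quadratic in $(z^{\varepsilon},u^{\varepsilon})$, should yield a scalar constraint of the form $\mathcal{Q}(u^{\varepsilon})=o(\varepsilon^{3})$, where $\mathcal{Q}$ is a sign-definite quadratic functional of $u^{\varepsilon}$ whose leading coefficient I expect to be proportional to $(\xi-\eta)(M_{1}+M_{2})/(M_{1}M_{2})$. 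Under the hypotheses of the theorem this coefficient is non-zero, so the constraint forces $u^{\varepsilon}$, hence $z^{\varepsilon}$, to be asymptotically trivial, contradicting Lemma~\ref{prop-loops}.

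The main technical obstacle is the consistent $\varepsilon^{3}$ bookkeeping. Because the linearization of \eqref{systeme_controle_robot} at $\mathbf{O}$ has rank at most $2$, the $x$-component (and possibly one further direction) only gets driven at order $\varepsilon^{3}$ through competing contributions: from the rotation matrix in \eqref{systeme_controle_robot2} evaluated at $\theta=O(\varepsilon^{2})$, from the quadratic Taylor coefficients of the $f_{i,j}$ at $0$, and from the cross-coupling between the elastically damped fast $\alpha$-mode and the slow $(y,\theta)$-modes. Proving the theorem reduces to an explicit computation of these Taylor coefficients from \cite[Prop.~II.1 and (12)--(16)]{giraldi2015local} and to verifying that no accidental cancellation occurs in the leading coefficient of $\mathcal{Q}$, so that it remains non-zero exactly under the conjunction of the three hypotheses $\xi\neq\eta$, $M_{1}\neq M_{2}$, $M_{1}+M_{2}\neq 0$.
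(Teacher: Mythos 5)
Your high-level strategy is the same as the paper's: apply the loop lemma, exhibit a scalar quantity whose derivative along trajectories is, to leading order, a sign-definite quadratic whose coefficient is nonzero exactly under the hypotheses, then integrate over a loop and conclude the loop is trivial. However, your sketch has several genuine gaps.

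First, the structural facts you state are inverted relative to the expansions the paper actually derives in \eqref{dev-4}: it is $\mathbf{F}_2(\mathbf O)=\mathbf 0$ (all $f_{2,j}(\alpha)=\mathcal O(\alpha)$), while $\mathbf{F}_1(\mathbf O)=(0,b_2,b_3,b_4)\neq\mathbf 0$, so the linear actuation at the origin comes through the $y,\theta,\alpha$-components of $\mathbf F_1$ rather than the $\theta,\alpha$-components of $\mathbf F_2$; this matters because the coordinates you eliminate and the controls you solve for must be matched to the vector field that actually drives the linearization. Second, and more seriously, the crucial computation is only asserted: you ``expect'' a sign-definite quadratic with coefficient $\propto(\xi-\eta)(M_1+M_2)/(M_1M_2)$, but note that expression does not vanish when $M_1=M_2$, whereas the theorem's hypotheses include $M_1\neq M_2$ and Proposition~\ref{th-old} shows the system is not even STLC$(q)$ in that case; the actual obstruction constant is $c_0=\tfrac{108\kappa}{\ell^8\eta^3\xi}(M_2^2-M_1^2)(\eta-\xi)$, with the factor $(M_2-M_1)(M_2+M_1)$, and checking that no cancellation occurs is precisely the content one must prove, not expect.

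Third, the concluding step does not close. A bound of the form $\mathcal Q(u^\varepsilon)=o(\varepsilon^3)$ with $\mathcal Q$ a positive quadratic functional of an $L^\infty$-control bounded by $\varepsilon$ on $[0,\varepsilon]$ is essentially automatic (e.g.\ $\int_0^\varepsilon |u|^2\leq\varepsilon^3$ already), and ``asymptotically trivial'' does not contradict the strict non-triviality required by Lemma~\ref{prop-loops}, which must be violated for \emph{each fixed} small $\varepsilon$. What is actually needed is a relative comparison: the paper reduces $\dot\zeta$ to $z_4^2(c_0+\mathcal O(\varepsilon))+z_3z_4\,\mathcal O(1)+z_3^2\,\mathcal O(1)$ and then, via the separate estimate $\int_0^\varepsilon|z_3z_4|\leq K\varepsilon\int_0^\varepsilon z_4^2$ and $\int_0^\varepsilon z_3^2\leq K^2\varepsilon^2\int_0^\varepsilon z_4^2$ (Lemma~\ref{lem}, which uses that $\dot z_3=z_4(1+\cdots)$, i.e.\ $z_3$ is an $\varepsilon$-short integral of $z_4$), arrives at $|c_0|\int_0^\varepsilon z_4^2\leq(2K\varepsilon+K^2\varepsilon^2)\int_0^\varepsilon z_4^2$, forcing $z_4\equiv 0$ exactly, hence $H_\bot\equiv0$ and a constant trajectory. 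Your Gr\"onwall bound $\|z^\varepsilon\|=\mathcal O(\varepsilon^2)$ is correct but insufficient for this: it gives absolute $\varepsilon$-orders for all terms, not the needed relative smallness of the cross and $z_3^2$ terms against $\int z_4^2$. Supplying the analog of Lemma~\ref{lem} and replacing the $o(\varepsilon^3)$ statement by such a comparison inequality is the missing core of the proof.
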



\begin{proof} 
From \cite[Prop. II.1 and (12)-(16)]{giraldi2015local}, one readily verifies that the functions $f_{i,j}$ introduced in \eqref{systeme_controle_robot2} have the
following expansions around $\alpha=0$:%
\begin{align}
\nonumber
&f_{2,j}(\alpha)=\mathcal{O}(\alpha), && \hspace{-8em}j\in\{1,2,3,4\},  
\\
\nonumber
&
f_{0,1}(\alpha)=a_1\hspace{.05em}\alpha^2+\!\mathcal{O}(\alpha^{\!3}), &&
f_{1,1}(\alpha)=b_1\alpha+\!\mathcal{O}(\alpha^{\!2}),
\\
\label{dev-4} 
&
f_{0,2}(\alpha)=\mathcal{O}(\alpha^{\!2}), &&
f_{1,2}(\alpha)=b_2+\!\mathcal{O}(\alpha),
\\
\nonumber
&f_{0,3}(\alpha)=\frac{a}2\alpha+\!\mathcal{O}(\alpha^{\!2}),  && 
f_{1,3}(\alpha)=b_3+\!\mathcal{O}(\alpha),
\\
\nonumber 
&f_{0,4}(\alpha)=-a\,\alpha+\!\mathcal{O}(\alpha^{\!2}), &&
  f_{1,4}(\alpha)=b_4+\!\mathcal{O}(\alpha),
\end{align}
with
\begin{align}
\nonumber
&\!
a_1\!=\!\frac {3\kappa}{\ell^2\eta},\,b_1\!=\!\frac32  \frac {M_2\!-\!M_1}{\ell^2\eta}-\frac38\frac {M_1\!+\!M_2}{
\ell^2\xi}
,\,
  b_2 =\ \frac34 \frac{M_1\!+\!M_2}{\ell^2 \eta},
\\
\label{a1a2b1b2b3b4}
&\!
  a_2=\frac{24\,\kappa}{\ell^3\eta},\; 
  b_3= \frac{3(5 M_2 -3M_1)}{2\,\ell^3 \eta}, \;
  b_4= \frac{12 (M_1-M_2)}{\ell^3 \eta}.
\end{align}
The assumptions before \eqref{eq:2} and these of the theorem imply
$b_4\neq0$, $a_2\neq0$, $M_1+M_2\not =0$ and $M_1-M_2\not =0$, hence
\begin{equation}
z_4=\frac1{b_4}\,\alpha,\;
z_3=\frac{8\,(M_1-M_2)}{a_2\,(M_1+M_2)}\Bigl(b_4\,\theta-b_3\,\alpha\Bigr)
\label{eq_brunovsky222}
\end{equation}
defines a change of coordinates\footnote{For the reader's information: the linear approximation of \eqref{systeme_controle_robot} is in (non-controllable) Brunovsky form in coordinates $(x,\,y-b_2z_3\,,z_3,z_4)$.} $(x,y,\theta,\alpha)\mapsto(x,y,z_3,z_4)$. 
Since 
$8(M_1\!-\!M_2)/(M_1\!+\!M_2)=1/(1/2\!+\!b_3/b_4)$, one deduces from
\eqref{systeme_controle_robot}, \eqref{systeme_controle_robot2},
 \eqref{dev-4},   and \eqref{eq_brunovsky222} the
following expressions of $\dot{z}_3$ and $\dot{z}_4$, where $r_{i,j}$
($i=0,1,2$, $j=3,4$) are smooth functions of one variable:
\begin{equation}
\!\!\!
\begin{array}{l}
\dot{z}_3 \! = z_4\left(\,1+z_4\,r_{0,3}(\!z_4\!) + H_{\!\bot}\, r_{1,3}(\!z_4\!) + H_{\parallel}\, r_{2,3}(\!z_4\!)\right) ,\\
\dot{z}_4 \! = H_{\!\bot} \!- z_4\!\left(a_2+z_4\,  r_{0,4}(\!z_4\!) + H_{\!\bot} \, r_{1,4}(\!z_4\!) + H_{\parallel} \, r_{2,4}(\!z_4\!)\right) \!.
\end{array}
\!\!\!\!\!\! \!\!\!\!\!\! \!\!\!\!\!\! 
\label{eq_brunovsky2}
\end{equation}

Substituting
$\alpha=b_4\,z_4$ and $\displaystyle\theta=b_3\,z_3+\frac{a_2\,(M_1+M_2)}{8b_4\,(M_1-M_2)}\,z_4$
in \eqref{systeme_controle_robot2}, expanding $\sin(\theta)$ and $\cos(\theta)$ around $0$ and using 
 \eqref{dev-4} one gets, with $c_1,c_2,c_3$ three constants that may
easily be computed from $a_1,a_2,b_1,b_2,b_3,b_4$, the expression:
\begin{gather}
\label{eq:102}
\!\!\!\! \!\!\!
\dot x = c_3\,z_4^{\,2}+ (c_1 z_3\!+\!c_2 z_4)H_{\!\bot}+z_4^{\,2}R_1+z_3z_4\,R_2+z_3^{\,2}R_3
\!\!\!\!
\\[.7ex]
\begin{split}
\label{eq:102b}
\!\!\!\! \!
\text{with}\ R_1=\,&z_4\, \rho_{1}(z_3,z_4)+\rho_{2}(z_3,z_4)\,H_{\!\bot}+\rho_{3}(z_3,z_4)\,H_{\parallel},
\!\!
\\
R_2=\,&\; \rho_{4}(z_3,z_4)+\rho_{5}(z_3,z_4)\,H_{\!\bot}+\rho_{6}(z_3,z_4)\,H_{\parallel},
\!\!
\\
R_3=\,&\;  \rho_{7}(z_3,z_4)+\rho_{8}(z_3,z_4)\,H_{\!\bot}+\rho_{9}(z_3,z_4)\,H_{\parallel}, 
\!\!
\end{split}
\end{gather}
and $\rho_i$, $i=1\ldots9$, nine smooth functions of two variables.
Then, defining
$
\zeta = x - c_1 z_3 z_4 - \frac{1}{2} c_2 z_4^{\,2}
$,
one has
\begin{equation}
\label{zetadot}
\begin{split}
  &\dot{\zeta} =z_4^{\,2}\bigl(c_0+ \widetilde{R}_1\bigr) +z_3
  z_4\,\widetilde{R}_2+z_3^{\,2}\,\,\widetilde{R}_3
\\&\text{with}\ c_0=c_3 +a_2 c_2 - c_1
\end{split}
\end{equation}
with 
$\widetilde{R}_1,\widetilde{R}_2,\widetilde{R}_3$ three functions of
$z_3$, $z_4$, $H_{\!\bot}$, $\!H_{\parallel}$ that can be expended similarly to $R_1$, $R_2$ and $R_3$ in \eqref{eq:102b}. 
Computing $c_0$ from the expressions of $c_1,c_2,c_3$, one finds that it is nonzero from the assumptions of Theorem \ref{th-main}:
\begin{equation}
\label{eq:103}
c_0 =\frac{108 \kappa}{\ell^8 \eta^3 \xi} (M_2^2-M_1^2) (\eta-\xi)\neq0.
\end{equation}

From Lemma \ref{prop-loops}, for each $\varepsilon>0$, there exists a ``loop''
$$
t\mapsto(x^\varepsilon(t),y^\varepsilon(t),\theta^\varepsilon(t),\alpha^\varepsilon(t),H_{\!\bot}^\varepsilon(t),\!H_{\parallel}^\varepsilon(t)\,)
$$
defined on $[0,\varepsilon]$, solution of \eqref{systeme_controle_robot}, and such that 
\begin{gather}
\label{loop-eps}
|H_{\!\bot}^\varepsilon(t)|\leqslant\varepsilon\;\text{and}\;|H_{\parallel}^\varepsilon(t)|\leqslant\varepsilon
\ \text{for all $t$ in $[0,\varepsilon]$,}
\\
\label{loop1loop2}
\begin{split}
  &(x^\varepsilon(0),y^\varepsilon(0),z_3^\varepsilon(0),z_4^\varepsilon(0))=(0,\!0,\!0,\!0),
  \\
  &(x^\varepsilon(\varepsilon),y^\varepsilon(\varepsilon),z_3^\varepsilon(\varepsilon),z_4^\varepsilon(\varepsilon))=(0,\!0,\!0,\!0),
\end{split}
\\
\label{trueloop}
\!\!\!\! \!\!\!
(x^\varepsilon(t),y^\varepsilon(t),z_3^\varepsilon(t),z_4^\varepsilon(t))\neq(0,\!0,\!0,\!0)
\text{ for one $t$ in $[0,\varepsilon]$,}\!\!\!
\end{gather}
where $z_3^\varepsilon(t),z_4^\varepsilon(t)$ are defined from $(\theta^\varepsilon(t),\alpha^\varepsilon(t))$ as in \eqref{eq_brunovsky222}.
Along these solutions, the functions $r_{i,j}$ and $\rho_i$ are
bounded uniformly with respect to $t$ in $[0,\varepsilon]$ and
$\varepsilon$ in $(0,\varepsilon_0]$ for some small enough $\varepsilon_0>0$. 
In particular, using \eqref{eq_brunovsky222} and \eqref{zetadot}, we deduce that for each $\varepsilon\in (0,\varepsilon_0]$, there are three functions $u^\varepsilon(.)$, $v^\varepsilon(.)$, $w^\varepsilon(.)$
such that
\begin{align}
\label{eq_brunovsky3z3}
&\dot{z}^\varepsilon_3(t)  = z_4^\varepsilon(t)\,u^\varepsilon(t)\,,
\\
\label{eq_brunovsky3zeta}
&\dot{\zeta}^\varepsilon(t) =z_4^\varepsilon(t)^2 \bigl(c_0+\varepsilon\,w^\varepsilon(t)\bigr)
+z_3^\varepsilon(t) z_4^\varepsilon(t)\;v^\varepsilon(t) + z_3^\varepsilon(t)^2 s^\varepsilon(t)\,, 
\\[.3ex]
\label{bru4}
&|u^\varepsilon(t)|\leqslant K,\;|v^\varepsilon(t)|\leqslant K,\;|w^\varepsilon(t)|\leqslant K,\;|s^\varepsilon(t)|\leqslant K\,.
\end{align}
 Here and hereafter, $K>0$ denotes a constant independent of $\varepsilon$ and $t$ that may vary from line to line.
One has:
\begin{lem}
\label{lem}
Equations \eqref{eq_brunovsky3z3} and \eqref{bru4} imply, for
$\varepsilon$ in $(0,\varepsilon_0]$,
\begin{displaymath}
  \int_0^\varepsilon \hspace{-.02em} |z_3^\varepsilon(t) z_4^\varepsilon(t)|\mathrm{d}t
\leqslant
K\varepsilon \int_0^\varepsilon \hspace{-.02em}
z_4^\varepsilon(t)^2\mathrm{d}t
\ \ \text{and}\ \ 
\int_0^\varepsilon \hspace{-.02em} z_3^\varepsilon(t)^2\mathrm{d}t
\leqslant
K^2\varepsilon^2 \! \int_0^\varepsilon \hspace{-.02em}  z_4^\varepsilon(t)^2\mathrm{d}t\,.
\end{displaymath}
\end{lem}

\smallskip

Let us temporarily admit this lemma. Then, equations \eqref{loop1loop2} imply
$\int_0^\varepsilon\dot\zeta^\varepsilon(t)\mathrm{d}t=0$. Substituting
$\dot\zeta^\varepsilon(t)$ from
\eqref{eq_brunovsky3zeta} and using 
\eqref{bru4} and Lemma~\ref{lem} yields, for any
$\varepsilon$ in $(0,\varepsilon_0]$,
\begin{equation}
\label{eq:111}
|c_0|\int_0^\varepsilon z_4^\varepsilon(t)^2\mathrm{d}t
\leqslant
\left(2K\varepsilon+K^2\varepsilon^2\right)\int_0^\varepsilon z_4^\varepsilon(t)^2\mathrm{d}t\,.
\end{equation}
Since $c_0\neq0$, this implies that $z_4^\varepsilon(t)$ is
identically zero on $[0,\varepsilon]$ for  $\varepsilon>0$ small enough. 
From \eqref{eq_brunovsky2}, this implies that the control
$H _\bot^\varepsilon(t)$ is identically zero. Since all the maps
$f_{i,j}$ with $i\neq1$ are zero at zero (see
\eqref{systeme_controle_robot2} and \eqref{dev-4}),
all state variables are constant if $z_4^\varepsilon$ and $H_\bot^\varepsilon$ are identically
zero, meaning that $(x^\varepsilon(t),y^\varepsilon(t),\theta^\varepsilon(t))$ are identically zero on $[0,\varepsilon]$ for  $\varepsilon>0$ small enough. Therefore, any small enough loop with small enough control
is trivial, which contradicts \eqref{trueloop} and hence contradicts STLC.
\end{proof}
\begin{proof}[Proof of Lemma \ref{lem}]
From \eqref{eq_brunovsky3z3} and \eqref{bru4},
one gets
\begin{displaymath}
|z_3^\varepsilon(t)|\leqslant
K\!\!\int_0^t\hspace{-.3em}|z_4^\varepsilon(\tau)|\,\mathrm{d}\tau,\ \ 
\int_0^\varepsilon \hspace{-.3em}|z_3^\varepsilon(t)|\mathrm{d}t \leqslant K\varepsilon\!\!\int_{0}^{\varepsilon}\hspace{-.3em}| z_4^\varepsilon(\tau)|\mathrm{d}\tau\,.
\end{displaymath}
The following two inequalities follow:
\begin{align*}
\nonumber
\int_0^\varepsilon \!\!|z_3^\varepsilon(t) z_4^\varepsilon(t)|\,\mathrm{d}t\;
&\leqslant
K\!
\int_0^\varepsilon\Bigl(\int_0^t |z_4^\varepsilon(\tau)|\, \mathrm{d}\tau\Bigl)   |z_4^\varepsilon(t)|\,\mathrm{d}t
\\[-.3ex]
&
  \leqslant
K \!\iint_{(t,\tau)\in[0,\varepsilon]^2}
\hspace{-2.6em}
|z_4^\varepsilon(t)|\,|z_4^\varepsilon(\tau)|\mathrm{d}t\,\mathrm{d}\tau
=
K\,\bigl(\int_0^\varepsilon\hspace{-.6em}
  |z_4^\varepsilon(t)|\mathrm{d}t\,\bigr)^2\,,
\end{align*}
\begin{align*}
\int_0^\varepsilon\!\!z_3^\varepsilon(t)^2\mathrm{d}t\;
&\leqslant
K\! \int_0^\varepsilon\Bigl(\int_0^t |z_4^\varepsilon(\tau)|\, \mathrm{d}\tau\Bigl)   |z_3^\varepsilon(t)|\,\mathrm{d}t
  \leqslant
K \!\iint_{(t,\tau)\in[0,\varepsilon]^2}
\hspace{-2.6em}
|z_3^\varepsilon(t)|\,|z_4^\varepsilon(\tau)|\mathrm{d}t\,\mathrm{d}\tau
\\[-.2ex]
&\hspace{2.5em}
=K\bigl(\int_0^\varepsilon\hspace{-.6em} |z_3^\varepsilon(t)|\mathrm{d}t\bigr)
\bigl(\int_0^\varepsilon\hspace{-.6em} |z_4^\varepsilon(t)|\mathrm{d}t\bigr)
\leqslant K^2\varepsilon\,\bigl(\int_0^\varepsilon\hspace{-.6em} |z_4^\varepsilon(t)|\mathrm{d}t\bigr)^2\,.
\end{align*}
We conclude by applying the Cauchy-Schwartz inequality. 
\end{proof}

\section{Conclusion}

We proved that the local controllability results in
\cite{giraldi2015local} are sharp in the sense that STLC
occurs only for the values of the parameters for which it was already proved in that note.

On the one hand, from the theoretical point of view of controllability,
although it deals with a
very specific class of systems, Theorem~\ref{th-main} is a necessary
condition for STLC.
Conditions for STLC have been much studied in the last
decades, see for instance \cite{coron2007control} or \cite{Kaws90dekker} and references
therein.
Many sophisticated and powerful sufficient conditions have been
stated, but necessary conditions are always specific, see for instance \cite{Kaws88contemp,Kras98}.
Theorem~\ref{th-main} is not, to the best of our knowledge, a
consequence of known necessary conditions.

On the other hand, the implications for locomotion at low Reynolds number via an
external magnetic field are not clear. Comments on that matter are
left to further research.

\end{document}